\newtheorem{lemma}{Lemma}
\newtheorem{theorem}{Theorem}
\newtheorem{corollary}{Corollary}
\newtheorem{conjecture}{Conjecture}
\title{Classifying Rotationally-Closed Languages Having Greedy Universal Cycles}
\author{Joseph DiMuro}
\date{May 29, 2018}
\begin{document}
\maketitle

\begin{abstract}
  Let $\textbf{T}(n,k)$ be the set of strings of length $n$ over the alphabet $\Sigma=\{1,2,\ldots,k\}$. A universal cycle for $\textbf{T}(n,k)$ can be constructed using a greedy algorithm: start with the string $k^n$, and continually append the least symbol possible without repeating a substring of length $n$. This construction also creates universal cycles for some subsets $\textbf{S}\subseteq\textbf{T}(n,k)$; we will classify all such subsets that are closed under rotations.
\end{abstract}

Let $\textbf{T}(n,k)$ be the set of strings of length $n$ over the alphabet $\Sigma=\{1,2,\ldots,k\}$. Given a subset $\textbf{S}\subseteq\textbf{T}(n,k)$, we will be interested in finding a ``universal cycle'' for $\textbf{S}$: that is, a string of length $|\textbf{S}|$ where each element of $\textbf{S}$ occurs exactly once in that string when it is viewed as a cycle.
  
For example, we could consider the subset $\textbf{S}_1\subseteq\textbf{T}(3,5)$ consisting of ascending strings and their rotations. That is,
$$\textbf{S}_1=\{123,124,125,134,135,145,231,234,235,241,245,251,312,341,342,$$
$$345,351,352,412,413,423,451,452,453,512,513,514,523,524,534\}$$
  
Below is a universal cycle for $\textbf{S}_1$. To make it easier to locate certain strings in the cycle, the string 534 is repeated at both the start and the end of the cycle. (The 534 is put in parentheses at the start as a reminder that the 534 is repeated.)
$$(534)123124134234512513514523524534$$

This universal cycle was constructed using a greedy algorithm: after choosing 534 as the starting string, each subsequent digit was chosen by looking for the smallest digit that could be chosen without duplicating any length-3 substrings (except for 534 itself, to conclude the cycle).
  
As another example, consider the subset $\textbf{S}_2\subseteq\textbf{T}(2,9)$ consisting of all two-digit strings where both the string and its reverse yield composite numbers in base 10. For example, $43\not\in\textbf{S}_2$ because 43 is prime; $34\not\in\textbf{S}_2$ because 34 is the reverse of a prime number.
$$\textbf{S}_2=\{12,15,18,21,22,24,25,26,27,28,33,36,39,42,44,45,$$
$$46,48,49,51,52,54,55,56,57,58,62,63,64,65,66,68,$$
$$69,72,75,77,78,81,82,84,85,86,87,88,93,94,96,99\}$$

If we try to construct a universal cycle for $\textbf{S}_2$ using a greedy algorithm starting from 99, here's what we get:
$$(99)33621224251526394454648182728496556685758699$$
Unfortunately, this cycle isn't quite universal: 77, 78, 87, and 88 are missing. This raises the question: can we find necessary and sufficient conditions on $\textbf{S}\subseteq\textbf{T}(n,k)$ so that a universal cycle for $\textbf{S}$ can be generated from a greedy algorithm? In this paper, we will find such conditions, under the assumption that $\textbf{S}$ is closed under rotations.
  
\section{Results}

Some notational conventions: when we are working with a particular set $\textbf{T}(n,k)$, we will use $\alpha$ and $\beta$ (possibly with subscripts) to represent strings in $\textbf{T}(n,k)$. Other Greek letters (like $\gamma$) will represent strings in $\textbf{T}(m,k)$ for some $m\le n$. (This includes the possibility of $\gamma$ being an empty string.) Latin letters ($a$, $b$, etc.) will represent individual elements of $\{1,\ldots,k\}$. We will sometimes use exponential notation to write strings with repetitions in a shorter form. A couple of examples: $2^5 3$ represents the string $222223$, and $4(21)^3$ represents the string $4212121$.

Given a set $\textbf{S}\subseteq\textbf{T}(n,k)$ (where $\textbf{S}\ne\emptyset$), let $\alpha\in\textbf{S}$. Define the string $\textit{Greedy}_\alpha(\textbf{S})$ as follows: let $\beta_0=\alpha$, and having defined $\beta_0$ through $\beta_j=a\gamma$, let $\beta_{j+1}=\gamma a_{j+1}$, where $a_{j+1}$ is the least element of $\{1,\ldots,k\}$ such that $\gamma a_{j+1}\in\textbf{S}$ and $\gamma a_{j+1}\ne\beta_i$ for all $1\le i\le j$. The process halts when we reach an $\beta_m$ where either $\beta_m=\alpha$, or it is impossible to define $\beta_{m+1}$. (The latter occurs when, given $\beta_m=a\gamma$, we have $\gamma b\in\{\beta_i\}_{i=1}^m$ for all $b\in\{1,\ldots,k\}$ such that $\gamma b\in\textbf{S}$.) Let $m$ be the largest positive integer for which $\beta_m$ exists. We then define $\textit{Greedy}_\alpha(\textbf{S})$ to be: $$\textit{Greedy}_\alpha(\textbf{S})=a_1 a_2\cdots a_m.$$ In the case where the process ends because $\beta_m=\alpha$, the string $\textit{Greedy}_\alpha(\textbf{S})$ may be viewed as a cycle; each of the strings from $\beta_1$ to $\beta_m=\alpha$ appears exactly once in that cycle.

Let $\mathcal{G}(n,k)$ be the collection of subsets $\textbf{S}\subseteq\textbf{T}(n,k)$ such that, for some $\alpha\in S$, $\textit{Greedy}_\alpha(\textbf{S})$ is a universal cycle for $\textbf{S}$. That is, the length-$n$ suffix of $\textit{Greedy}_\alpha(\textbf{S})$ is $\alpha$, and for all $\beta\in\textbf{S}$, $\beta$ is a substring of $\textit{Greedy}_\alpha(\textbf{S})$ (treated as a cycle). The goal is to find a characterization of the sets in $\mathcal{G}(n,k)$ that are closed under rotations.

Given $\textbf{S}\in\textbf{T}(n,k)$, and given $\alpha,\beta\in\textbf{S}$, we will say that $\beta$ is ``increasable in $\textbf{S}$ to $\alpha$'' if we can transform $\beta$ into $\alpha$ by continually increasing individual symbols, and if the resulting string after each such increase is in $\textbf{S}$. (By convention, we will say that $\alpha\in\textbf{S}$ is increasable in $\textbf{S}$ to $\alpha$.) For example, for our ``no primes'' set $\textbf{S}_2\subseteq\textbf{T}(2,9)$, 57 is increasable in $\textbf{S}_2$ to 99:

$$57\rightarrow 58\rightarrow 68\rightarrow 69\rightarrow 99$$

Given this definition, our ultimate result is the following:

\begin{theorem} \label{mainresult}
  Let $\textbf{S}\subseteq\textbf{T}(n,k)$ be closed under rotations, and let $\alpha,\beta\in\textbf{S}$. Then $\beta$ is a substring of $\textit{Greedy}_\alpha(\textbf{S})$ (treated as a cycle) if and only if $\beta$ is increasable in $\textbf{S}$ to a rotation of $\alpha$.
  
  Thus, if $\textbf{S}\subseteq\textbf{T}(n,k)$ is closed under rotations, then $\textbf{S}\in\mathcal{G}(n,k)$ if and only if there exists an $\alpha\in\textbf{S}$ such that every $\beta\in\textbf{S}$ is increasable in $\textbf{S}$ to $\alpha$. 
\end{theorem}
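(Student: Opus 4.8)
The plan is to prove the substring characterization by identifying exactly the set of strings that occur in $\textit{Greedy}_\alpha(\textbf{S})$. Write $u_0$ and $v_0$ for the length-$(n-1)$ prefix and suffix of $\alpha$, regard each string $s_1\cdots s_n\in\textbf{S}$ as a directed edge $s_1\cdots s_{n-1}\to s_2\cdots s_n$ of the transition graph $G$ whose vertices are the length-$(n-1)$ strings, and let $\textbf{D}=\{\beta\in\textbf{S}:\beta\text{ is increasable in }\textbf{S}\text{ to a rotation of }\alpha\}$, with $G_{\textbf{D}}$ the subgraph carrying the edges of $\textbf{D}$. The greedy process is precisely an edge-trail in $G$ that begins by laying down $\alpha$ and, from the current vertex, always crosses the unused edge of least final symbol (with $\alpha$ permitted as a final, trail-closing edge). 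Thus the first assertion of the theorem reduces to the single claim that the set $\textbf{V}$ of edges traversed equals $\textbf{D}$, and the classification in the second paragraph will drop out by specialising to $\textbf{V}=\textbf{D}=\textbf{S}$.

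First I would record three structural facts. (i) \emph{Monotonicity}: increasing a symbol never decreases a coordinate, so if $\beta$ is increasable to $\delta$ then $\beta\le\delta$ coordinatewise; since any rotation of $\alpha$ carries the same multiset of symbols as $\alpha$, it follows that no edge leaving $u_0$ with final symbol larger than that of $\alpha$ can lie in $\textbf{D}$ (coordinatewise domination by a rotation of $\alpha$ together with equal multisets forces equality). (ii) \emph{Rotation-equivariance}: a single-symbol increase commutes with cyclic rotation, and $\textbf{S}$ is rotation-closed, so $\textbf{D}$ is rotation-closed; moreover $\textbf{D}$ is an order ideal, being closed under coordinatewise single decreases that stay in $\textbf{S}$. (iii) \emph{Balance}: the rotations of a string trace a closed walk in $G$, so any rotation-closed edge set is a disjoint union of such closed walks; applied to $\textbf{D}$ this shows $G_{\textbf{D}}$ is balanced, with equal in- and out-degree at every vertex.

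For $\textbf{V}\subseteq\textbf{D}$ I would argue by contradiction at the first step that would leave $\textbf{D}$: if the current edge is $a\gamma\in\textbf{D}$, its left rotation $\gamma a$ again lies in $\textbf{D}$ by (ii), so whenever $\gamma a$ is still unused the least admissible final symbol is at most $a$, and (i)--(ii) then place the chosen edge back in $\textbf{D}$. For $\textbf{D}\subseteq\textbf{V}$ I would use (iii): in a balanced graph a greedily extended trail can only halt at its start vertex $u_0$, and by (i) every edge out of $u_0$ that the greedy rule does not force is either already used or lies above $\alpha$ and hence outside $\textbf{D}$, so at termination every $\textbf{D}$-edge leaving $u_0$ has been consumed and the trail closes at $\alpha$. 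The main obstacle, and the step I expect to require the most care, is the genuine interaction of the prefer-least rule with $\textbf{D}$ in \emph{both} inclusions simultaneously: the rule could in principle cross a small edge outside $\textbf{D}$ before a larger edge of $\textbf{D}$ (threatening $\textbf{V}\subseteq\textbf{D}$), or close the trail while leaving an entire cycle of $\textbf{D}$-edges untraversed (threatening $\textbf{D}\subseteq\textbf{V}$). I would handle both at once by an induction on the trail maintaining the invariant that at the current vertex the least unused admissible edge lies in $\textbf{D}$ and the still-unused $\textbf{D}$-edges continue to form an Eulerian continuation back to $u_0$; rotation-closure (which keeps the left rotation of the current edge available as a $\textbf{D}$-edge capping the greedy choice) together with monotonicity (which keeps anything the greedy is forced upward to still below a rotation of $\alpha$) is what preserves this invariant.

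Finally the classification: once $\textbf{V}=\textbf{D}$ is established, $\textit{Greedy}_\alpha(\textbf{S})$ is a universal cycle for $\textbf{S}$ exactly when $\textbf{D}=\textbf{S}$, that is, when every $\beta\in\textbf{S}$ is increasable in $\textbf{S}$ to a rotation of $\alpha$; hence $\textbf{S}\in\mathcal{G}(n,k)$ if and only if some $\alpha\in\textbf{S}$ has this property. This is the consequence displayed in the theorem, with the caveat that one should read the target as a rotation of $\alpha$ (the coordinatewise-maximal elements of $\textbf{S}$, which by (ii) form a single rotation orbit when $\textbf{S}\in\mathcal{G}(n,k)$, are exactly the admissible starting strings), and I would make the role of that orbit explicit when writing up the corollary.
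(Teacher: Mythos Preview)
Your de Bruijn-graph/Eulerian approach is genuinely different from the paper's, which instead introduces a two-player ``Warden's Game'' and analyses it via a remoteness function: the key technical step there (Lemma~5) is that at most one position has each finite remoteness, so the game tree collapses to a single chain, and the greedy sequence is then identified with that chain by a short induction. Your structural facts (i)--(iii) are correct, and the inclusion $\textbf{V}\subseteq\textbf{D}$ can indeed be completed from them---though your sketch only treats the case where the rotation $\gamma a$ is still unused; when it is already used you need the balance of $G_{\textbf{D}}$ at $\gamma$ (your (iii)) together with the downward-closure of $\textbf{D}$ in the last coordinate (your (ii)) to force an unused $\textbf{D}$-out-edge of label smaller than every edge in $\textbf{S}\setminus\textbf{D}$.

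The genuine gap is the reverse inclusion $\textbf{D}\subseteq\textbf{V}$. You correctly observe that $\alpha$ is the largest $\textbf{D}$-out-edge at $u_0$, so the greedy consumes every $\textbf{D}$-edge leaving $u_0$ before halting; but this does not prevent an entire balanced sub-circuit of $G_{\textbf{D}}$, not meeting $u_0$, from being skipped---prefer-smallest on a connected balanced digraph can and does close early in general. Your proposed fix is to maintain the invariant that the unused $\textbf{D}$-edges ``form an Eulerian continuation back to $u_0$,'' but that invariant is exactly the conclusion you need, and your assertion that rotation-closure plus monotonicity preserves it is not justified: removing the smallest unused out-edge can disconnect the remaining unused edges from the current vertex, and nothing in (i)--(iii) rules that out. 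The paper's remoteness argument bypasses this difficulty entirely, because the chain structure forces strings to be picked up one remoteness level at a time with nothing left behind. If you want to complete the Eulerian route you will need a substantive extra lemma---for instance that the traversed set $\textbf{V}$ is itself rotation-closed, or a Martin-style propagation showing that any unused $\textbf{D}$-edge forces an unused $\textbf{D}$-out-edge at $u_0$---and that lemma will carry essentially all the weight of the proof.
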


This theorem explains the absence of 77, 78, 87, and 88 in the string $\textit{Greedy}_{99}(\textbf{S}_2)$: none of those four strings are increasable in $\textbf{S}_2$ to 99, since none of 79, 89, 97, or 98 is in $\textbf{S}$. Note that this theorem is a generalization of Theorem 3, from \cite{SWW}.

The proof of this result will rely on an analysis of a combinatorial game, which we'll call the Warden's Game. The rules for this game will be given in Section 2, and the proof of Theorem \ref{mainresult} will follow in Section 3. In Section 4, we will look at several interesting families of sets $\textbf{S}\subseteq\textbf{T}(n,k)$ where a universal cycle can be generated with the greedy algorithm. Lastly, a possible avenue for future work will be detailed in Section 5.

\section{The Warden's Game}

Consider the following fanciful scenario: there's a certain prison warden who loves playing games. He sometimes makes an offer to let his prisoners out of prison, if they can beat him at a particular game. The game works as follows:

The warden shows the prisoner a row of $n$ $k$-sided dice on a table. On each die, the faces are numbered from 1 to $k$. (It's possible to have $k=2$ here; the ``dice'' would then be coins with a 1 on one side and a 2 on the other.) A certain string $\alpha\in\textbf{T}(n,k)$ is chosen: the prisoner will earn his freedom if, after any move, the dice on the table are showing the string $\alpha$. (If the dice are showing $\alpha$ at the start of the game, the prisoner does not immediately win; the prisoner only wins when the dice show $\alpha$ after a move.) This game will be played at a rate of one move per day. So, the prisoner wants to reach $\alpha$ as quickly as possible; the warden wants to delay this as long as possible (indefinitely, if he can).

Each day, the rightmost die in the row will be moved to the far left, and possibly rotated to show a different number. The warden always has priority; he may transfer the rightmost die to the far left, and lower the number on that die. If he doesn't want to do that (or he can't, because he can't lower the number any further), then the warden passes; then the prisoner must transfer the rightmost die to the far left, and optionally increase the number on that die.

As an example: let $n=3$ and $k=6$, so that the game is being played with three 6-sided dice. Let's say the current position is 513; the leftmost die shows 5, the middle die shows 1, and the rightmost die shows 3. The warden may transfer the rightmost die to the far left, lowering its value to 1 or 2 (thus producing the position 151 or 251). Or the warden may pass, in which case the prisoner must transfer the rightmost die and optionally increase its value (producing one of the positions 351, 451, 551, or 651). Let's say the warden chooses to move to the position 251. Then on the next move, the warden can't lower the value showing on the rightmost die. So the warden must pass, and the prisoner can move to 125, 225, 325, 425, 525, or 625. And so on.

Note: in the case where $k=2$, the rules can be stated even more simply. If the rightmost coin is showing a 2, the warden transfers that coin, and optionally flips it to 1. If the rightmost coin is showing a 1, the prisoner transfers that coin, and optionally flips it to 2.

We can generalize this game still further, by limiting the legal positions in the game. We can choose any subset $\textbf{S}\subseteq\textbf{T}(n,k)$, closed under rotations, to be the set of legal positions. (We'll assume that the goal state $\alpha$ is in $\textbf{S}$.) Then each move of the game, whether made by the prisoner or the warden, must be to a position in $\textbf{S}$. We require that $\textbf{S}$ be closed under rotations so that there is a legal move from every legal position; if the warden ever passes, the prisoner always has the option to transfer the rightmost die without changing its value.

In \cite{GW}, Weiss analyzed the Warden's Game (though not under that name) in the case where $k=2$, $\textbf{S}=\textbf{T}(n,2)$, and $\alpha$ is the string $2^n$. Weiss proved that the game tree for the game is summarized by the lexicographically minimal de Bruijn sequence for $\textbf{T}(n,2)$; if both players play optimally, the game will proceed backwards through the de Bruijn sequence, one move at a time. For example, if $n=4$, the lexicographically minimal de Bruijn sequence is the following:

$$(2222)1111211221212222$$

For this game, consider the position 2212. If we move one step backwards in the de Bruijn sequence from 2212, we get 1221; thus, the optimal move from 2212 must be for the warden to flip the rightmost coin before moving it, producing the position 1221. Similarly, the next optimal move is for the prisoner to move from 1221 to 1122, and so on, until the goal position 2222 is finally reached.

As we will prove in Section 3, the same holds true for any values of $n$ and $k$, any subset $\textbf{S}\subseteq\textbf{T}(n,k)$ of legal positions (closed under rotations), and any goal state $\alpha\in\textbf{S}$. The greedy algorithm always generates the full game tree for the Warden's Game. As an example, let's once again consider the subset $\textbf{S}_2\subseteq\textbf{T}(2,9)$ consisting of those strings where both the string itself and its reverse are 2-digit composite numbers. Here, once again, is the (not quite universal) cycle generated by the greedy algorithm, starting from $\alpha=99$.

$$(99)33621224251526394454648182728496556685758699$$

For example, consider the position 82. The preceding substring of length 2 is 18; thus, the optimal move must be for the warden to move the rightmost die and reduce its value from 2 to 1. The next optimal move must be to 81; both the prisoner and the warden refuse to change the value on the rightmost die. The next optimal move is to 48, which means the warden passes, and the prisoner increases the value on the rightmost die from 1 to 4. And so on.

Remember, four positions from $\textbf{S}_2$ do not appear in this cycle: 77, 78, 87, and 88. Why don't they appear? Because they are losing positions for the prisoner! From any of those positions, the warden has a simple way to keep the game going indefinitely: he refuses to ever decrease the value on a die, and passes every time. The prisoner will never be able to increase a die to a 9, because 79, 89, 97, and 98 are all illegal positions. So the prisoner will never be able to reach the goal state, 99.

Our goal for the next section is to prove that this sort of thing happens regardless of the choices of $\textbf{S}$ and $\alpha$. We will show that the prisoner can win from a given position $\beta\in\textbf{S}$ if and only if he can win from $\beta$ with the warden always passing: this happens when $\beta$ is increasable in $\textbf{S}$ to a rotation of $\alpha$. We will also show that the greedy algorithm generates the game tree for this game; thus, the greedy algorithm generates a universal cycle if and only if every $\beta\in\textbf{S}$ is increasable in $\textbf{S}$ to a rotation of $\alpha$.

\section{Proof of Theorem 1}

Assume we are given values of $n$ and $k$, a set $\textbf{S}\subseteq\textbf{T}(n,k)$ of legal positions (closed under rotations), and a goal state $\alpha\in\textbf{S}$. Define the ``remoteness function'' $r$ on $\textbf{S}$ as follows: given $\beta\in\textbf{S}$, the remoteness of $\beta$, $r(\beta)$, is the number of moves the game will last starting from $\beta$ if both players play optimally. If the warden can keep the game going forever, then $r(\beta)=\infty$. This definition of remoteness is similar to the concept of remoteness used in \cite{BCG}.

Note: we can consider $\alpha$ to either be an end position (of remoteness 0) or a start position (of nonzero remoteness). We will always use the notation $r(\alpha)$ for the number of moves the game will last \textbf{starting} from $\alpha$; thus, $r(\alpha)>0$.

\begin{lemma} \label{ifwardenpasses}
  Let $\beta_1,\beta_2\in\textbf{S}$ be such that $\beta_1=\gamma a$ and $\beta_2=a'\gamma$ for $a'\ge a$. (Thus, if the current position is $\beta_1$ and the warden passes, then the prisoner may move to $\beta_2$.) Then, starting from $\beta_1$, the prisoner has a strategy which can force the position to eventually reach $\beta_2$. 
\end{lemma}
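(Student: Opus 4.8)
The plan is to treat the Warden's Game as a finite reachability game and to show that $\beta_1$ lies in the prisoner's \emph{attractor} for the target $\beta_2$. The cleanest way to organize the dynamics is to track only the sequence of values placed at the far-left position over time: if $h_s$ denotes the value of the die moved to the front on day $s$, then the position after day $s$ is the window $h_s h_{s-1}\cdots h_{s-n+1}$, and the die being moved on day $s$ is precisely the one last set on day $s-n$. Hence each move constrains $h_s$ only against $h_{s-n}$: the warden may set $h_s<h_{s-n}$ whenever the resulting window lies in $\textbf{S}$, and otherwise he passes, whereupon the prisoner sets $h_s\ge h_{s-n}$. Read modulo $n$, the game splits into $n$ interleaved copies of the one-die game, coupled solely through the requirement that every length-$n$ window lie in $\textbf{S}$; this is the only place the hypothesis of rotation-closure will enter. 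In these terms $\beta_1=\gamma a$ is the initial window $c_1\cdots c_{n-1}a$ and the target $\beta_2=a'\gamma$ is the window $a'c_1\cdots c_{n-1}$, reached in a single prisoner move exactly when the warden passes on day $1$.

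The engine of the argument is that the warden cannot stall indefinitely. On day $1$ the warden either passes, in which case the prisoner immediately plays $h_1=a'\ge a$ and reaches $\beta_2$, or he lowers, reaching $b\gamma$ with $b<a$. In general, each die value is bounded below by $1$, so within any one residue class the warden can strictly decrease only finitely often before he is forced to pass. I would exploit this by having the prisoner decline to raise until the warden has been driven to pass, so that the warden's supply of lowering moves is genuinely exhausted rather than replenished by premature raises.

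For the winning phase I would use the periodic head-pattern $\ldots,c_{n-1},c_{n-2},\ldots,c_1,a',c_{n-1},\ldots$ of period $n$: every length-$n$ window of this pattern is a rotation of $\beta_2$, so by rotation-closure every such window lies in $\textbf{S}$, and one of them is $\beta_2$ itself. Thus once the prisoner has locked each residue class at its target value (the corresponding symbol of $\beta_2$), the position is forced to cycle through the rotations of $\beta_2$ and in particular to hit $\beta_2$. The prisoner's strategy is therefore to use the forced passes from the boundedness step to raise the front die toward its target value whenever possible, locking the residue classes one at a time. To show this terminates I would track a lexicographic monovariant, first counting the residue classes not yet locked at their target and then the total shortfall of the remaining classes, and argue that the prisoner can strictly decrease it on each pass while no warden move can increase it.

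The main obstacle is legality of the \emph{transient} positions. While the prisoner is still assembling the pattern, a window mixes already-locked target symbols with values the warden has just lowered, and such a window need not lie in $\textbf{S}$, so the prisoner's intended raise may be blocked; worse, the warden may try to re-lower exactly the die the prisoner has just raised, threatening an infinite exchange. Overcoming this is the crux, and I expect it to require choosing the order in which the residue classes are locked so that every transient window the prisoner passes through is itself a genuine rotation of $\beta_2$ (hence legal by rotation-closure), together with verifying that the lexicographic monovariant strictly decreases under the prisoner's response to every warden move. This is precisely the point at which the assumption that $\textbf{S}$ is closed under rotations becomes indispensable.
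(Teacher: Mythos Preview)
Your proposal identifies the right difficulty but does not resolve it. You explicitly flag the ``main obstacle''---that the transient windows produced while the prisoner is locking residue classes one at a time may fail to lie in $\textbf{S}$---and then say only that you ``expect'' an appropriate locking order together with a monovariant argument will handle it. That is not a proof; for an arbitrary rotation-closed $\textbf{S}$ there is no reason any position between your post-phase-1 state and $\beta_2$, other than rotations of positions already visited, must be legal at all. Your decomposition into $n$ interleaved one-die games is appealing, but the coupling through $\textbf{S}$ is the entire content of the lemma, and you have not shown how to thread through it. A related loose end: once the prisoner raises a die toward its target, the warden may immediately re-lower it on the next pass through that residue class, so your monovariant would also have to survive warden moves, which you assert but do not verify.

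The paper's argument sidesteps the obstacle completely with a short induction on the sum of the symbols of $\beta_1$. Write $\beta_1=b_1\cdots b_{n-1}a$. If the warden passes, the prisoner moves directly to $\beta_2=a'b_1\cdots b_{n-1}$ and is done. If instead the warden lowers to $a''b_1\cdots b_{n-1}$ with $a''<a$, the symbol sum has strictly decreased, and the prisoner now invokes the inductive hypothesis $n$ times in succession to force \emph{pure rotations}: first to $b_{n-1}a''b_1\cdots b_{n-2}$, then to $b_{n-2}b_{n-1}a''b_1\cdots b_{n-3}$, and so on back to $b_1\cdots b_{n-1}a''$. Each of these intermediate targets is a rotation of the current position, hence lies in $\textbf{S}$ by rotation-closure, and each is an instance of the very lemma being proved with $a'=a$ (keep the value) and strictly smaller symbol sum than the original $\beta_1$. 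From $b_1\cdots b_{n-1}a''$ one final application of the inductive hypothesis (still smaller sum, and $a'\ge a>a''$) forces the position to $\beta_2$. The key point is that the prisoner never aims for any intermediate position that is not already a rotation of a position known to be in $\textbf{S}$; this is precisely what your residue-locking scheme fails to guarantee.
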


\begin{proof}
  This can be proven by induction on the sum of the symbols in $\beta_1$. Starting from $\beta_1=b_1 b_2\cdots b_{n-1}a$, if the warden passes, then the prisoner may move immediately to $\beta_2$. Otherwise, the warden must move to $\beta_3=a''b_1 b_2\cdots b_{n-1}$ for some $a''<a$. But the sum of the symbols of $\beta_3$ is less than the sum of the symbols of $\beta_1$. So by the inductive hypothesis, the prisoner has a strategy to eventually force the position to $b_{n-1}a''b_1\cdots b_{n-2}$, then to $b_{n-2}b_{n-1}a''b_1\cdots b_{n-3}$, and so on to $b_1 b_2\cdots b_{n-1}a''$. We still have a smaller sum than the sum of the symbols in $\beta_1$, so the prisoner can eventually force the position to $\beta_2=a'b_1 b_2\cdots b_{n-1}$, since $a'\ge a''$.
\end{proof}

\begin{lemma}
  Given $\beta\in\textbf{S}$, the prisoner can win from $\beta$ if and only if $\beta$ is increasable in $\textbf{S}$ to a rotation of $\alpha$.
\end{lemma}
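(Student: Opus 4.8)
The plan is to prove both implications by comparing the full game with a restricted ``always-pass'' game in which the warden never lowers a die. The engine behind the equivalence is that a single prisoner move from $\gamma a$ to $a'\gamma$ (with $a'\ge a$) is exactly a cyclic rotation followed by an in-place increase of one symbol; thus the ``increasable'' relation and the game dynamics differ only by a global rotation. Since $\textbf{S}$ is closed under rotations, the increasable relation is rotation-equivariant: if a single in-place increase carries $\delta\in\textbf{S}$ to $\delta'\in\textbf{S}$, then the same increase performed on any common rotation of $\delta$ and $\delta'$ again lands in $\textbf{S}$. I would first record the consequence that ``$\beta$ is increasable in $\textbf{S}$ to a rotation of $\alpha$'' is equivalent to ``some rotation of $\beta$ is increasable in $\textbf{S}$ to $\alpha$,'' obtained by rotating an entire increasing chain.

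For the forward direction (increasable $\Rightarrow$ prisoner wins), I would first realize the hypothesis inside the always-pass game. Given an increasing chain $\beta=\delta_0,\delta_1,\dots,\delta_t=\alpha'$ ending at a rotation $\alpha'$ of $\alpha$, each single increase (say of the symbol in position $j$) can be simulated by a block of legal moves: rotate with unchanged values until that symbol occupies the rightmost die, apply one increase as the die moves to the far left, then continue; a final block of value-preserving rotations carries $\alpha'$ to $\alpha$ itself. All intermediate strings lie in $\textbf{S}$ by rotation-closure. This yields an explicit sequence of positions $\beta=p_0,p_1,\dots,p_j=\alpha$ in which each $p_{i+1}$ arises from $p_i=\gamma a$ as $a'\gamma$ with $a'\ge a$. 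To upgrade this from the cooperative always-pass game to a strategy against an arbitrary warden, I apply Lemma \ref{ifwardenpasses} to each consecutive pair: it guarantees the prisoner can force the game from $p_i$ to $p_{i+1}$ no matter how the warden interferes by lowering dice. Chaining these forcings reaches $\alpha$, so the prisoner wins (any accidental early arrival at $\alpha$ only yields a faster win).

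For the converse I would argue the contrapositive: if $\beta$ is not increasable in $\textbf{S}$ to any rotation of $\alpha$, the warden survives forever by always passing. This is a legal strategy, since rotation-closure guarantees the prisoner always has at least the value-preserving move available, so the game never stalls. It then remains to show that no sequence of prisoner moves reaches $\alpha$. I would establish the invariant that every position $\sigma$ reachable under always-pass satisfies ``some rotation of $\beta$ is increasable in $\textbf{S}$ to $\sigma$,'' by induction on the number of moves. The base case is $\sigma=\beta$. For the step, suppose a rotation $\beta^\sharp$ of $\beta$ is increasable to $\sigma=\gamma a$ and the prisoner moves to $\tau=a'\gamma$ with $a'\ge a$. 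Rotating the increasing chain from $\beta^\sharp$ to $\gamma a$ by one position (legal by closure) produces a chain from a rotation of $\beta$ to $a\gamma$, and appending the single increase $a\gamma\to a'\gamma$ (which stays in $\textbf{S}$ because $\tau\in\textbf{S}$) exhibits $\tau$ as increasable from a rotation of $\beta$. Since $\alpha$ fails this condition by hypothesis, $\alpha$ is never reached, so the prisoner cannot win.

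The main obstacle is precisely the bookkeeping in this invariant: the game forces a rotation on \emph{every} move, whereas the increasable relation increases symbols in place, so the two descriptions agree only up to a global rotation. Keeping the rotation offset synchronized, and repeatedly invoking rotation-closure to certify that each rotated intermediate string is still a legal position, is the delicate point. Once the rotation-equivariance of the increasable relation is set up cleanly, both the forward simulation and the inductive invariant fall out, with Lemma \ref{ifwardenpasses} supplying the only genuinely game-theoretic ingredient.
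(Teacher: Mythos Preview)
Your proposal is correct and follows essentially the same approach as the paper: the warden's always-pass strategy handles the ``not increasable'' direction via an invariant on reachable positions, and the ``increasable'' direction converts an increasing chain into a sequence of always-pass moves and then invokes Lemma~\ref{ifwardenpasses} to force each step against an arbitrary warden. You spell out the rotation bookkeeping (rotation-equivariance of the increasable relation and the inductive maintenance of the invariant) more explicitly than the paper does, but the underlying argument is the same.
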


\begin{proof}
  If $\beta$ is not increasable in $\textbf{S}$ to a rotation of $\alpha$, then the warden can keep the game going indefinitely, simply by passing on every turn. Since the prisoner can only increase values, if the warden always passes, the prisoner will only be able to reach positions $\gamma$ where $\beta$ is increasable in $\textbf{S}$ to a rotation of $\gamma$. Since $\alpha$ is not such a position, the prisoner can never win.
  
  Now assume that $\beta$ is increasable in $\textbf{S}$ to a rotation of $\alpha$. Then, if the warden chooses to pass on every move, then there is a sequence of moves $\beta=\beta_0,\beta_1,\beta_2,\ldots,\beta_m=\alpha$ that the prisoner may make to win. By Lemma \ref{ifwardenpasses}, if the game starts from $\beta=\beta_0$, then prisoner can eventually force the position to be $\beta_1$, then $\beta_2$, and so on until finally reaching $\alpha$ and winning.
\end{proof}

Note: while this shows that the prisoner can win eventually from any position $\beta$ that is increasable in $S$ to $\alpha$, the recursive strategy described above will probably \textbf{not} be the prisoner's optimal strategy.

\begin{lemma} \label{nojumps}
  Given any positive integer $m$, if there are no positions of remoteness $m$, then there are no positions of remoteness $m+1$. (Thus, by induction, there are no positions of remoteness $m'$ for any integer $m'\ge m$.)
\end{lemma}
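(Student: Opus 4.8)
The plan is to read the statement off an explicit recursion for the remoteness function $r$, which I would first derive from the mechanics of the Warden's Game. Fix a position $\beta=\gamma a\in\textbf{S}$, where $a$ is the symbol on the rightmost die. A single day's move transfers that die to the left and produces a position $b\gamma$; the warden, moving first, may either slide the die to some $b\gamma\in\textbf{S}$ with $b<a$, or pass, in which case the prisoner slides it to some $b\gamma\in\textbf{S}$ with $b\ge a$. Since $\textbf{S}$ is closed under rotations, $a\gamma$ is always a legal target, so the prisoner is never stuck. Writing $W=\{b\gamma\in\textbf{S}:b<a\}$ and $P=\{b\gamma\in\textbf{S}:b\ge a\}$ and using that the warden tries to lengthen the game while the prisoner tries to shorten it, I would establish
\[
  r(\beta)=1+\max\Big(\max_{\beta'\in W}r(\beta'),\ \min_{\beta'\in P}r(\beta')\Big),
\]
with the warden term omitted when $W=\emptyset$ and with $1+\infty=\infty$.

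Granting this recursion, the lemma falls out immediately. First I would pick any $\beta$ with $r(\beta)=m+1$ finite, so that the bracketed maximum equals $m$. If that maximum is realized by the warden term, some $\beta'\in W$ has $r(\beta')=m$; if it is realized by the prisoner term, then $\min_{\beta'\in P}r(\beta')=m$ and some $\beta'\in P$ attains it. In either case a position of remoteness exactly $m$ exists, and the contrapositive is precisely the lemma; the parenthetical induction on $m'\ge m$ then follows at once. Stated in game terms, the content is simply that the first move of any optimal line from a remoteness-$(m+1)$ position lands on a remoteness-$m$ position.

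The step I expect to require the most care is justifying the recursion itself, i.e.\ that a single optimal move changes $r$ by exactly $1$. This is a principle-of-optimality (subgame perfection) claim: I must argue that deleting the first move from an optimal line leaves an optimal line from the resulting position $\beta^*$, so that $r(\beta)=1+r(\beta^*)$ with $\beta^*$ the position actually reached, and hence $r(\beta^*)=m$. I would also confirm the infinite case causes no trouble---the hypothesis picks out finite remoteness $m+1$, and the successor it produces has finite remoteness $m$---and note the edge case is vacuous: since $m\ge1$, the guaranteed successor has remoteness $m\ge1$ and so is not the terminal state $\alpha$ (of end-remoteness $0$). Because the same one-move argument applies uniformly, no separate base case is needed.
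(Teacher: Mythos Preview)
Your approach is correct and is essentially the paper's: the paper's entire proof is the one-line observation that from any position of remoteness $m+1$, the first optimal move lands on a position of remoteness $m$, which is exactly your final sentence. You have simply unpacked the principle-of-optimality recursion that the paper takes for granted; the explicit max--min formula is not needed for this lemma, though it does no harm.
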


\begin{proof}
  If there were a position of remoteness $m+1$, then with optimal play, the first move from such a position would be to a position of remoteness $m$... and no such position exists. So there are no positions of remoteness $m+1$.
\end{proof}

\begin{lemma} \label{largerhelpsdwarden}
  Given positions $\gamma a_1,\gamma a_2\in\textbf{S}$, if $a_1<a_2$, then $r(\gamma a_1)\le r(\gamma a_2)$.
\end{lemma}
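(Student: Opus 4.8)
The plan is to reduce everything to a single observation: a move from either $\gamma a_1$ or $\gamma a_2$ always produces a position of the form $c\gamma$ with the \emph{same} suffix $\gamma$. Writing $\gamma=b_1\cdots b_{n-1}$, any move transfers the rightmost die to the left to give $cb_1\cdots b_{n-1}=c\gamma$; the warden's lowering moves are those with $c<a$, and (after a pass) the prisoner's moves are those with $c\ge a$, always subject to $c\gamma\in\textbf{S}$. So both positions feed into one common pool of successors $\{c\gamma\}$, and the only thing that changes when the last symbol goes from $a_1$ to $a_2$ is the threshold dividing the warden-controlled successors from the prisoner-controlled ones.

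First I would record the remoteness recursion given by optimal play. On a single move from $\gamma a$, the warden either lowers the die, reaching his best (most remote) legal successor with $c<a$, or passes, after which the prisoner reaches his best (least remote) legal successor with $c\ge a$; the warden takes whichever is larger. This yields
$$r(\gamma a)=1+\max\Bigl(\max_{c<a,\,c\gamma\in\textbf{S}}r(c\gamma),\ \min_{c\ge a,\,c\gamma\in\textbf{S}}r(c\gamma)\Bigr),$$
with the inner maximum read as $-\infty$ when the warden has no lowering move. Because $\textbf{S}$ is rotation-closed, $\gamma a\in\textbf{S}$ forces $a\gamma\in\textbf{S}$, so the index set of the inner minimum always contains $c=a$ and is nonempty; this keeps the recursion well defined and is exactly the role played by rotation-closure.

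The inequality then follows by comparing the two recursions term by term using $a_1<a_2$. The successors split into three bands: those with $c<a_1$ are warden-controlled from both positions, those with $c\ge a_2$ are prisoner-controlled from both, and those with $a_1\le c<a_2$ are prisoner-controlled from $\gamma a_1$ but warden-controlled from $\gamma a_2$. Passing from $a_1$ to $a_2$ thus only hands successors to the warden: the inner maximum is taken over the larger set $\{c<a_2\}\supseteq\{c<a_1\}$ and so weakly increases, while the inner minimum is taken over the smaller set $\{c\ge a_2\}\subseteq\{c\ge a_1\}$ and so also weakly increases. Hence both arguments of the outer maximum grow, giving $r(\gamma a_1)\le r(\gamma a_2)$; the argument is unchanged in the extended reals, so the case $r(\gamma a_2)=\infty$ is automatic.

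The step that needs the most care is establishing the recursion itself, since the game graph contains cycles and infinite plays, so $r$ is not given by a well-founded recursion. I would justify it by the standard min--max (Bellman) principle for pursuit games — the prisoner minimizing and the warden maximizing the total number of moves — treating $\infty$ as a genuine value under the usual order, so that monotonicity of $\max$ and $\min$ over nested index sets applies verbatim. Once the recursion is granted (equivalently, once one accepts the intuitive statement that transferring control of moves from the minimizing player to the maximizing player cannot shorten the game), the rest is immediate.
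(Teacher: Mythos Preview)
Your proof is correct and is essentially a formalized version of the paper's own argument: the paper simply observes informally that from $\gamma a_2$ the warden has every option he had from $\gamma a_1$ plus the additional ability to select any $c\gamma$ with $a_1\le c<a_2$, and that ``this extra option can only help the warden, never hurt him.'' Your Bellman-recursion formulation and the monotonicity of $\max$/$\min$ over the nested index sets $\{c<a\}$ and $\{c\ge a\}$ make precisely this intuition rigorous, and your explicit attention to the $\infty$ case and to why rotation-closure keeps the prisoner's option set nonempty goes a bit beyond what the paper spells out.
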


The point here is that, the greater the rightmost symbol in the string, the better off the warden is. From $\gamma a_1$, the warden may move to any $a\gamma\in\textbf{S}$ such that $a<a_1$, or the warden may give the prisoner the choice to move to any $a\gamma\in\textbf{S}$ where $a\ge a_1$. From $\gamma a_2$, the warden still may move to any $a\gamma\in\textbf{S}$ such that $a<a_1$, or the warden can ensure that the next move is to $a\gamma\in\textbf{S}$ for some $a\ge a_1$... but in the latter case, the warden may choose a specific $a\gamma\in\textbf{S}$ such that $a_1\le a<a_2$, if he so desires. This extra option can only help the warden, never hurt him. So we must have $r(\gamma a_1)\le r(\gamma a_2)$.

Note: we will later see that if $r(\gamma a_1)$ and $r(\gamma a_2)$ are both finite, then $r(\gamma a_1)<r(\gamma a_2)$.

\begin{lemma} \label{onechain}
  For any nonnegative integer $m$, there is at most one position of remoteness $m$.
\end{lemma}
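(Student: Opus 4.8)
The plan is to argue by strong induction on $m$. For $m=0$ the only position of remoteness $0$ is the goal state $\alpha$ itself, so there is nothing to prove; assume then that $m\ge 1$ and that for every $m'<m$ there is at most one position of remoteness $m'$. If a position of remoteness $m$ exists at all, Lemma \ref{nojumps} guarantees that positions of every remoteness $0,1,\dots,m-1$ exist, so by the inductive hypothesis these are unique; call them $\delta_0=\alpha,\delta_1,\dots,\delta_{m-1}$. Suppose toward a contradiction that there are two distinct positions $\beta_1\ne\beta_2$ of remoteness $m$. Under optimal play the first move from a position of remoteness $m$ lands on a position of remoteness exactly $m-1$, which must be $\delta_{m-1}$; hence both $\beta_1$ and $\beta_2$ are predecessors of $\delta_{m-1}$. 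Writing $\delta_{m-1}=b\gamma$ with $\gamma$ of length $n-1$, a string can move to $b\gamma$ only by transferring its rightmost die to the front, so every predecessor of $\delta_{m-1}$ has the form $\gamma c$. Thus $\beta_1=\gamma a_1$ and $\beta_2=\gamma a_2$ share the prefix $\gamma$, and we may assume $a_1<a_2$. Lemma \ref{largerhelpsdwarden} already gives $r(\gamma a_1)\le r(\gamma a_2)$; the whole task is to upgrade this to a strict inequality, contradicting $r(\gamma a_1)=r(\gamma a_2)=m$.

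Two tools would drive the argument. The first is a warden-lowering inequality: since $a_1<a_2$ and $a_1\gamma\in\textbf{S}$ (it is a rotation of $\gamma a_1$), from $\gamma a_2$ the warden may lower the rightmost die from $a_2$ to $a_1$, reaching $a_1\gamma$, so $r(\gamma a_2)\ge 1+r(a_1\gamma)$. The second is injectivity of remoteness below $m$: the strings $\{c\gamma\}$ are pairwise distinct, so by the inductive hypothesis no two of them can have the same remoteness less than $m$; in particular $b$ is the unique symbol with $r(b\gamma)=m-1$, since $b\gamma=\delta_{m-1}$. The warden's choice at $\gamma a_i$ is between lowering the die (reaching some $c\gamma$ with $c<a_i$) and passing (the prisoner then picks $c\gamma$ with $c\ge a_i$), and I would split on how the value $m=r(\gamma a_1)$ is attained at the \emph{smaller} position.

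In the clean case, passing attains $m$ at $\gamma a_1$, i.e. $1+\min_{c\ge a_1}r(c\gamma)=m$. Then $\min_{c\ge a_1}r(c\gamma)=m-1$, so $r(a_1\gamma)\ge m-1$; but the warden-lowering bound at $a_2$ gives $m=r(\gamma a_2)\ge 1+r(a_1\gamma)$, hence $r(a_1\gamma)\le m-1$. Therefore $r(a_1\gamma)=m-1$, so $a_1\gamma=\delta_{m-1}$ and $b=a_1$. On the other hand, passing is also a warden option at $\gamma a_2$, so $1+\min_{c\ge a_2}r(c\gamma)\le m$; since every $c\ge a_1$ (hence every $c\ge a_2$) satisfies $r(c\gamma)\ge m-1$, this minimum equals $m-1$ and is attained at some $c^{*}\ge a_2$. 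Then $r(c^{*}\gamma)=m-1$ forces $c^{*}\gamma=\delta_{m-1}$, so $c^{*}=b=a_1$, contradicting $c^{*}\ge a_2>a_1$.

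The hard part, and the step I expect to be the main obstacle, is the complementary case: passing does \emph{not} attain $m$ at $\gamma a_1$, so the warden strictly prefers to lower, reaching $b\gamma=\delta_{m-1}$ with $b<a_1$, while $\min_{c\ge a_1}r(c\gamma)\le m-2$ and hence $r(a_1\gamma)\le m-2$. Here the warden-lowering inequality at $a_2$ only yields $r(\gamma a_2)\ge 1+r(a_1\gamma)\le m-1$, which is useless, and a one-step local analysis gives no contradiction: both positions can genuinely route to the same $\delta_{m-1}$ by lowering to the same symbol $b$. Resolving this appears to require a more global argument about the optimal-play chain rather than a single move. Concretely, I would try to exploit that $a_1\gamma$ is itself some $\delta_\mu$ with $\mu\le m-2$, whose chain-predecessor $\delta_{\mu+1}$ is again a string of the form $\gamma c_2$ (a predecessor of $a_1\gamma$), with $c_2<a_1$ by monotonicity; pushing this backward through the prefix-$\gamma$ family, together with the finiteness of $\textbf{S}$ and the uniqueness already available below $m$, should force a repeated remoteness value at a level smaller than $m$, contradicting the inductive hypothesis. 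Once Lemma \ref{onechain} is established, the strict form of Lemma \ref{largerhelpsdwarden} promised earlier follows immediately, since two distinct strings $\gamma a_1,\gamma a_2$ could otherwise share a finite remoteness.
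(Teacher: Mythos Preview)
Your Case~1 is correct. In the setup of Case~2, however, the inference ``$\min_{c\ge a_1}r(c\gamma)\le m-2$ and hence $r(a_1\gamma)\le m-2$'' is invalid: a small minimum over $c\ge a_1$ does not bound the value at $c=a_1$. (The conclusion $r(a_1\gamma)\le m-2$ is in fact true, but for a different reason: warden-lowering from $\gamma a_2$ gives $r(a_1\gamma)\le m-1$, and equality would force $a_1\gamma=\delta_{m-1}$, i.e.\ $a_1=b$, contradicting $b<a_1$.) More seriously, your Case~2 is left as a sketch, and the sketch does not close. Iterating ``$a_1\gamma=\delta_\mu$, $\delta_{\mu+1}=\gamma c_2$ with $c_2<a_1$'' produces a descending walk inside the prefix-$\gamma$ family, but nothing in that walk manufactures two positions of equal remoteness below $m$, nor does it prevent the warden at $\gamma a_2$ from simply lowering to $b\gamma$ and realising remoteness exactly $m$. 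A local comparison of just $\gamma a_1$ and $\gamma a_2$ is not enough here.

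The paper avoids your pass/lower dichotomy by bringing in the \emph{entire} prefix-$\gamma$ family at once. List all $b_1<\cdots<b_l$ with $\gamma b_i\in\textbf{S}$; Lemma~\ref{largerhelpsdwarden} plus uniqueness below $m$ give $r(\gamma b_1)<\cdots<r(\gamma b_{j-1})<r(\gamma b_j)=m$, where $j$ is minimal with $r(\gamma b_j)=m$. For each $i\le j$ let $a_i\gamma$ be the optimal successor of $\gamma b_i$; the $a_i$ are pairwise distinct since the $a_i\gamma$ have the distinct remotenesses $r(\gamma b_i)-1$. A short chain-predecessor argument shows these $a_i\gamma$ are \emph{exactly} the strings of the form $c\gamma$ with remoteness below $m$. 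Now compare the two $j$-element sets $\{a_1,\dots,a_j\}$ and $\{b_1,\dots,b_j\}$. If they coincide, then any $c\ge b_{j+1}$ lies outside $\{a_i\}$, so every prisoner reply to a pass at $\gamma b_{j+1}$ has remoteness $\ge m$, giving $r(\gamma b_{j+1})\ge m+1$. If they differ, some $b_i$ with $i\le j$ is not among the $a$'s; then $b_i\gamma$ has remoteness $\ge m$, and the warden can lower from $\gamma b_{j+1}$ to $b_i\gamma$, again forcing $r(\gamma b_{j+1})\ge m+1$. Either way $r(\gamma b_{j+1})>m$, the desired contradiction. The idea you are missing is precisely this pigeonhole comparison of $\{a_i\}$ against $\{b_i\}$: it replaces your case split on the warden's behaviour at a single position with a structural dichotomy that works uniformly.
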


This is a significant result; combined with Lemma \ref{nojumps}, the conclusion is that the ``game tree'' is really a chain, not a tree. There is one position of remoteness 0 (namely, $\alpha$), one position of remoteness 1, one position of remoteness 2, and so on until all the winning positions for the prisoner have been exhausted. And given any position $\beta$ that is winning for the prisoner, if a game starting from $\beta$ is played optimally, the game will pass through all positions of remoteness less than $r(\beta)$ until finally reaching $\alpha$.

\begin{proof}
  We will prove this by contradiction. Let $m$ be the smallest integer where there are multiple positions of remoteness $m$. There is only one position of remoteness 0 (namely, $\alpha)$, so $m\ge 1$. Let $a\gamma$ be the one position of remoteness $m-1$; this position must be reachable in one move from all positions of remoteness $m$, so all such positions must have the form $\gamma b$.
  
  Let $b_1<b_2<\cdots <b_l$ be the elements of $\{1,2,\cdots,k\}$ such that $\gamma b_i\in\textbf{S}$ for each $i$. By Lemma \ref{largerhelpsdwarden}, $r(\gamma b_1)\le r(\gamma b_2)\le\cdots\le r(\gamma b_l)$. If $j$ is the smallest natural number such that $r(\gamma b_j)=m$, then because there is just one position of each remoteness less than $m$, we must have $$r(\gamma b_1)<r(\gamma b_2)<\cdots <r(\gamma b_{j-1})<r(\gamma b_j)=r(\gamma b_{j+1}).$$
  
  For each $i\le j$, let $a_i\gamma$ be the next position reached from $\gamma b_i$ if both sides play optimally (the $a_i$'s for $1\le i\le j$ are all distinct). We can now show that it is impossible to have $r(\gamma b_j)=r(\gamma b_{j+1})=m$:
  
  Consider the two sets $\{a_i\}_{i=1}^j$ and $\{b_i\}_{i=1}^j$. If these two sets are identical, then consider what happens if the warden passes from the position $\gamma b_{j+1}$. The prisoner is then forced to move to $a\gamma$ for some $a\ge b_{j+1}$. This $a$ will not be an element of $\{a_i\}_{i=1}^j$, and hence $a\gamma$ will not have remoteness at most $m-1$. So by passing, the warden can force the next move to be to a position of remoteness at least $m$; $\gamma b_{j+1}$ can't have remoteness $m$.
   
   On the other hand, assume $\{a_i\}_{i=1}^j$ and $\{b_i\}_{i=1}^j$ are not identical. That means there is some $b_i<b_{j+1}$ that is not in $\{a_i\}_{i=1}^j$. If the warden moves from $\gamma b_{j+1}$ to $b_i\gamma$, then the warden has not moved to a position of remoteness at most $m-1$. So again, the warden was able to force the next move to be to a position of remoteness at least $m$; $\gamma b_{j+1}$ can't have remoteness $m$.
   
   This completes the contradiction; it is impossible to have two positions of the same finite remoteness.
\end{proof}

\begin{lemma}
  If $\alpha\in\textbf{S}$ is the goal state, then the position in $\textbf{S}$ of highest finite remoteness is $\alpha$.
\end{lemma}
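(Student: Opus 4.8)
The plan is to identify the highest finite remoteness with $r(\alpha)$ itself and then let uniqueness finish the job. First I would record two facts about the goal state. Since $\alpha$ is (by convention) increasable in $\textbf{S}$ to itself, and hence to a rotation of $\alpha$, the earlier characterization of winnable positions guarantees that the prisoner can win from $\alpha$, so $r(\alpha)$ is finite; and by the stated convention on start positions, $r(\alpha)>0$. Write $M=r(\alpha)$, so $1\le M<\infty$. By Lemma \ref{onechain}, $\alpha$ is the unique position of remoteness $M$. It therefore suffices to prove that no position of $\textbf{S}$ has finite remoteness exceeding $M$; uniqueness then pins the position of highest finite remoteness down as $\alpha$.

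The heart of the argument is a single-move contradiction. Suppose some position had finite remoteness greater than $M$. Because Lemma \ref{nojumps} forbids gaps in the set of attained remoteness values, there would then be a position $\tau$ of remoteness exactly $M+1$ (unique, by Lemma \ref{onechain}). Exactly as in the proof of Lemma \ref{nojumps}, optimal play from $\tau$ must begin by moving to the unique position of remoteness $M$; but that position is $\alpha$. Reaching $\alpha$ after a move is precisely the prisoner's winning condition, so a game starting from $\tau$ would already be over after one move, forcing $r(\tau)=1$. Since $M\ge 1$ gives $M+1\ge 2$, this contradicts $r(\tau)=M+1$. Hence no position has finite remoteness above $M$, the maximum finite remoteness equals $M=r(\alpha)$, and $\alpha$ is the position attaining it.

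The step I expect to require the most care is the bookkeeping around the double role of $\alpha$, which serves both as the terminal state of ``remoteness $0$'' and as a genuine start position of remoteness $M$. Two points must be made precise: that any move onto a remoteness-$M$ position really is a move onto $\alpha$ (immediate from the uniqueness in Lemma \ref{onechain}, given $r(\alpha)=M$), and that the single optimal move out of the hypothetical position $\tau$ does land on the remoteness-$M$ position rather than somewhere else (this is exactly the chain behavior asserted by Lemmas \ref{onechain} and \ref{nojumps}, namely that optimal play decrements remoteness by one at each move). Once those two observations are secured, the simple fact that the game halts the instant $\alpha$ is reached does all the remaining work, preventing the chain from extending past $\alpha$ and thereby closing it into a cycle.
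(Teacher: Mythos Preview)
Your proposal is correct and follows essentially the same argument as the paper: establish that $r(\alpha)$ is finite and positive, then derive a contradiction from a hypothetical position of remoteness $r(\alpha)+1$ by observing that optimal play from it would reach $\alpha$ in one move, forcing remoteness $1$. Your write-up is slightly more careful than the paper's in explicitly invoking Lemma~\ref{nojumps} to reduce ``some finite remoteness exceeds $M$'' to ``remoteness $M+1$ is attained,'' and in flagging the dual role of $\alpha$; otherwise the route is the same.
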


The reason: since $\alpha$ is (trivially) increasable in $\textbf{S}$ to a rotation of $\alpha$, $r(\alpha)$ is finite. Say $r(\alpha)=m>0$ (we are treating $\alpha$ as a start position, not an end position). If there were any position $\beta$ such that $r(\beta)=m+1$, then with optimal play, the next move from $\beta$ would be to a position of remoteness $m$: namely, $\alpha$. But that means, with optimal play, $\beta$ is just one move from the goal state; so $r(\beta)=1$, not $m+1$. Thus, $\alpha$ has the maximal finite remoteness of any string in $\textbf{S}$.

This also means that the positions in $\textbf{S}$ that are winning for the prisoner form a cycle. The only question remaining is why this is the same cycle that we would get from the greedy algorithm.

\begin{theorem}
  The greedy algorithm generates the game ``tree'' for the warden's game.
\end{theorem}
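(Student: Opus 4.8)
The plan is to show that the greedy sequence $\beta_0=\alpha,\beta_1,\beta_2,\ldots$ coincides, term by term, with the chain of positions ordered by remoteness. By the preceding lemmas the prisoner's winning positions form a single chain: there is exactly one position $\delta_i$ of each remoteness $i$ for $0\le i\le M$, where $\delta_0=\alpha$ (as goal) and $\delta_M=\alpha$ (as start, with $M=r(\alpha)$), and optimal play carries $\delta_i$ to $\delta_{i-1}$ in one move. The first observation is that a greedy step is exactly the reverse of a game move: greedy sends $a\gamma\mapsto\gamma c$ (delete the leftmost symbol, append a new rightmost one), whereas a game move sends $\gamma' b\mapsto b'\gamma'$ (delete the rightmost symbol, prepend a new leftmost one). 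Consequently I expect $\beta_j=\delta_j$ for all $j$, so that greedy traverses the chain in increasing remoteness and reproduces precisely the (reversed) optimal line of play; this is exactly the content of the theorem.

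Before the induction I would record a strengthening of Lemma \ref{largerhelpsdwarden}: if $\gamma a_1,\gamma a_2\in\textbf{S}$ with $a_1<a_2$ and both have finite remoteness, then $r(\gamma a_1)<r(\gamma a_2)$. Indeed, Lemma \ref{largerhelpsdwarden} already gives $\le$, and since $\gamma a_1\ne\gamma a_2$, Lemma \ref{onechain} forbids equal finite remotenesses, so the inequality is strict. This is the tool that lets me compare the rightmost symbols of positions sharing a common length-$(n-1)$ prefix.

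The heart of the proof is an induction on $j$ showing $\beta_j=\delta_j$. The base case $\beta_0=\delta_0=\alpha$ is immediate, and inductively I assume $\beta_i=\delta_i$ for all $i\le j$, so the set of strings already forbidden to greedy is exactly $\{\delta_1,\ldots,\delta_j\}$. Write $\delta_j=a\gamma$. Since optimal play sends $\delta_{j+1}$ to $\delta_j$ in one game move, and a game move deletes the rightmost symbol and prepends a new one, $\delta_{j+1}$ must have the form $\gamma d$ for some symbol $d$; thus $\delta_{j+1}$ and the greedy successor $\gamma c$ share the prefix $\gamma$, and it remains to identify $c$ with $d$. Consider all symbols $b$ with $\gamma b\in\textbf{S}$, say $b_1<\cdots<b_l$, with positions $\gamma b_i$. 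By Lemma \ref{largerhelpsdwarden} their remotenesses are nondecreasing in $i$, and by the strengthening above the finite ones are strictly increasing. Writing $\delta_{j+1}=\gamma b_{i^*}$ (so $r(\gamma b_{i^*})=j+1$), every $b_i<b_{i^*}$ has $r(\gamma b_i)\le j+1$, is finite (a losing position at a smaller rightmost symbol would force $\gamma b_{i^*}$ itself to be losing, by monotonicity), and hence by strictness has remoteness at most $j$, so $\gamma b_i\in\{\delta_1,\ldots,\delta_j\}$ is forbidden. Since $\gamma b_{i^*}$ has remoteness $j+1>j$, it is available, so the least admissible symbol is $b_{i^*}$; thus $c=b_{i^*}=d$ and $\beta_{j+1}=\delta_{j+1}$. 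Applying this at $j=M-1$ selects $\delta_M=\alpha$, and the process halts there since $\alpha\notin\{\delta_1,\ldots,\delta_{M-1}\}$.

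The main obstacle is this identification of the greedy choice with the optimal game move — specifically, ruling out that some $\gamma b_i$ with $b_i<b_{i^*}$ is available to greedy. Two dangers must be excluded: an already-discarded winning position (handled by the strict refinement of Lemma \ref{largerhelpsdwarden}) and a losing position of infinite remoteness sitting at a smaller rightmost symbol (handled by the plain monotonicity of Lemma \ref{largerhelpsdwarden}, which propagates $\infty$ upward). Once these are excluded the rest is bookkeeping: greedy never stalls before reaching $\alpha$ because $\delta_{j+1}$ is always a legal choice, and it halts precisely at $\beta_M=\alpha$, so the greedy string is exactly the chain $\delta_0,\delta_1,\ldots,\delta_M$, that is, the full game tree.
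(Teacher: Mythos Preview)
Your proof is correct and follows essentially the same approach as the paper: both argue by induction on $j$ that the greedy string $\beta_j$ coincides with the unique position $\delta_j$ of remoteness $j$, using Lemma~\ref{largerhelpsdwarden} for monotonicity and Lemma~\ref{onechain} for strictness among finite remotenesses. The only cosmetic difference is that the paper enumerates the \emph{unused} strings $\gamma b$ and argues the least of them has remoteness $m+1$, whereas you enumerate \emph{all} strings $\gamma b$ and argue those with $b<b_{i^*}$ are already used; your version is slightly more explicit in ruling out an infinite-remoteness position at a smaller symbol, but the underlying argument is the same.
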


\begin{proof}
  Let $\{\beta_m\}\subseteq\textbf{S}$ be the sequence of strings generated by the greedy algorithm, starting from $\alpha$. We have $\beta_0=\alpha$, and for each $m\ge 0$, if $\beta_m=a\gamma$, then $\beta_{m+1}=\gamma b$, where $b$ is the least element of $\{1,\cdots,k\}$ such that $\gamma b\in\textbf{S}$ and $\gamma b$ does not appear in the set $\{\beta_i\}_{i=1}^m$. (If $\gamma b\in \{\beta_i\}_{i=1}^m$ for all $b\in\{1,\cdots,k\}$, then there is no $\beta_{m+1}$; $\beta_m$ is the last string in the sequence.) Obviously, $\beta_0=\alpha$ is the one position of remoteness 0. We must show that $r(\beta_m)=m$ for all $m>0$; we will prove this by induction.
  
  Assume we have $r(\beta_i)=i$ whenever $0<i\le m$. Let $\beta_m=a\gamma$. Assume there is a position of remoteness $m+1$; it must be of the form $\gamma b\in\textbf{S}$ (so that there is a move available to $a\gamma$), where $\gamma b$ is not in $\{\beta_i\}_{i=1}^m$ (since all strings in that set have remoteness $m$ or less). Let $b_1<b_2<\cdots <b_j$ be the elements of $\{1,\cdots k\}$ such that $\gamma b_i\in\textbf{S}$, but $\gamma b_i$ is not in $\{\beta_i\}_{i=1}^m$. (Thus, $\beta_{m+1}=\gamma b_1$.) By Lemma \ref{largerhelpsdwarden}, $r(\gamma b_1)\le r(\gamma b_2)\le\cdots \le r(\gamma b_j)$; by Lemma \ref{onechain}, all of those inequalities are strict except for where we have multiple positions of infinite remoteness. We have $r(\gamma b_1)>m$, so the only $i$ where we can have $r(\gamma b_i)=m+1$ is $i=1$. Thus, we must have $r(\beta_{m+1})=r(\gamma b_1)=m+1$.
  
  Now assume that $m$ is the largest finite remoteness of any position in $\textbf{S}$; that is, $r(\alpha)=m$. The above inductive argument shows that $\beta_m=\alpha$. And the way we defined $\textit{Greedy}_\alpha(\textbf{S})$, the process halts if we ever have $\beta_m=\alpha$. So we do have $r(\beta_m)=m$ for all $m>0$; the length-$n$ substrings of $\textit{Greedy}_\alpha(\textbf{S})$ are exactly the winning positions for the prisoner, in order of remoteness.
\end{proof}

We thus have proven Theorem \ref{mainresult}; the length-$n$ substrings of $\textit{Greedy}_\alpha(\textbf{S})$ are exactly the winning positions for the prisoner, which are exactly the strings in $\textbf{S}$ which are increasable in $\textbf{S}$ to a rotation of $\alpha$.

As a final note for this section, here's a comment on the optimal strategy for the warden:

\begin{corollary}
  Given any position $\gamma b\in\textbf{S}$, if $a$ is the greatest number less than $b$ such that $a\gamma\in\textbf{S}$, then the optimal move for the warden from $\gamma b$ is either to move to $a\gamma$, or to pass. (So, when the warden does decrease a number, he should always do so by the smallest amount possible.)
\end{corollary}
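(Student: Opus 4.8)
The plan is to reduce the corollary to a single monotonicity fact about the remoteness of the warden's decrease targets, and then to establish that fact by tracking the decreased die as it cycles back to the rightmost position, where Lemma~\ref{largerhelpsdwarden} can be applied. First I would dispose of the case $r(\gamma b)=\infty$: by the characterization of winning positions, the warden keeps the game going forever simply by passing on every turn, so ``pass'' is already an optimal move and the claim holds. So assume $r(\gamma b)=m$ is finite. By Lemma~\ref{onechain} together with the fact that optimal play runs backwards along the chain generated by the greedy algorithm, the optimal move from $\gamma b$ goes to the unique position of remoteness $m-1$, and any move from $\gamma b$ produces a position whose trailing $n-1$ symbols are $\gamma$; hence that position has the form $c\gamma$. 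If $c\ge b$ the optimal move is a pass and we are done, so assume $c<b$, a genuine decrease, and we must show $c=a$. Here I would record one easy but crucial observation: since $r(\gamma b)$ is finite, \emph{every} decrease target $z\gamma$ with $z<b$ has finite remoteness, for otherwise the warden could decrease into an infinite-remoteness position and win, forcing $r(\gamma b)=\infty$.

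With this in hand the corollary reduces to the claim $(\dagger)$: among the decrease targets, remoteness is maximized at the largest one, i.e. $r(a\gamma)\ge r(z\gamma)$ for every legal $z<a$. Granting $(\dagger)$, since $a$ is the largest decrease target we get $r(a\gamma)=\max_{z<b}r(z\gamma)$; and because the decrease to $c\gamma$ is optimal, $r(c\gamma)=m-1=\max_{z<b}r(z\gamma)=r(a\gamma)$. As both remotenesses are finite, Lemma~\ref{onechain} forces $a\gamma=c\gamma$, hence $c=a$, as required. To prove $(\dagger)$ itself I would compare the games started from $a\gamma$ and from $z\gamma$ (with $z<a$), which differ only in the leading die. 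That die is the last one the players can alter: over the next $n-1$ days it slides to the rightmost position while the remaining dice evolve identically, and once it is rightmost the two positions have the shape $\delta a$ and $\delta z$ with a common prefix $\delta$, where Lemma~\ref{largerhelpsdwarden} gives $r(\delta z)\le r(\delta a)$ — a larger trailing die never hurts the warden. The vehicle for keeping the two games synchronized until that moment is a forcing argument in the spirit of Lemma~\ref{ifwardenpasses}, used to let the warden mirror, in the $a\gamma$-game, whatever transpires in the $z\gamma$-game.

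The hard part will be making this coupling rigorous, because remoteness is genuinely \emph{not} invariant under rotation, and $r(z\gamma)$ is \emph{not} monotonic in the leading symbol in general: when $z\gamma$ sits close to $\alpha$, its remoteness can drop sharply, so $(\dagger)$ is simply false without the finiteness hypothesis on $\gamma b$. The point of that hypothesis is exactly that it keeps all decrease targets finite, and thereby keeps the leading die away from the values at which such a drop occurs. The delicate step — and the technical heart of the argument — is to verify that an optimally-playing prisoner cannot exploit this leading die to complete $\alpha$ prematurely in one of the two games but not the other; ruling this out is what prevents the coupled games from desynchronizing before the decreased die returns to the rightmost position, at which point Lemma~\ref{largerhelpsdwarden} closes the comparison.
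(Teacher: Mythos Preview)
Your reduction to the monotonicity claim $(\dagger)$ is sound, and the observation that every decrease target $z\gamma$ has finite remoteness is correct and needed. The gap is in the coupling argument you sketch for $(\dagger)$. You propose to run the $a\gamma$-game and the $z\gamma$-game in parallel, with the warden ``mirroring'' until the distinguished die cycles to the rightmost slot, and then to invoke Lemma~\ref{largerhelpsdwarden}. But the two games have no reason to stay synchronized even for one move: writing $\gamma=\gamma' d$, the legal successors of $a\gamma$ are the strings $d'a\gamma'\in\textbf{S}$, while the legal successors of $z\gamma$ are the strings $d''z\gamma'\in\textbf{S}$, and since $\textbf{S}$ is only assumed rotation-closed these two sets of admissible left symbols can differ arbitrarily. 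So the warden in the $a\gamma$-game may simply be unable to copy what happens in the $z\gamma$-game, and the desynchronization you worry about is not merely a matter of the prisoner ``completing $\alpha$ prematurely'' --- it can occur immediately, for structural reasons. You flag this step as the technical heart but do not carry it out, and I do not see how to make the coupling go through along these lines.

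The paper's proof avoids $(\dagger)$ altogether with a one-step use of rotation closure. Suppose, for contradiction, that the warden's optimal move from $\gamma b$ is to some $c\gamma$ with $c<a$, so $r(c\gamma)=m-1$. Since $a\gamma\in\textbf{S}$ and $\textbf{S}$ is rotation-closed, $\gamma a\in\textbf{S}$; as $a<b$, Lemmas~\ref{largerhelpsdwarden} and~\ref{onechain} give $r(\gamma a)<r(\gamma b)=m$. But from $\gamma a$ the warden can decrease to $c\gamma$ (because $c<a$), forcing $r(\gamma a)\ge 1+r(c\gamma)=m$, a contradiction. The idea you are missing is to place the symbol $a$ on the \emph{right} via a single rotation, where Lemma~\ref{largerhelpsdwarden} applies at once, rather than on the left, where you must wait $n-1$ moves and fight to keep two games aligned.
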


\begin{proof}
  Assume not. Assume there is a position $\gamma b\in\textbf{S}$, where $a$ is the greatest number less than $b$ where $a\gamma\in\textbf{S}$, but the warden's optimal move is to $c\gamma$, where $c<a$. Let $r(\gamma b)=m$; then the remoteness of $c\gamma$ is $m-1$ (where, if $c\gamma=\alpha$, we are treating $\alpha$ as an end position).
  
  Since $\textbf{S}$ is closed under rotations and $a\gamma\in\textbf{S}$, we have $\gamma a\in\textbf{S}$. From Lemmas \ref{largerhelpsdwarden} and \ref{onechain}, since $a<b$, we have $r(\gamma a)<r(\gamma b)$. So $r(\gamma a)<m$. But from the position $\gamma a$, the warden can move to $c\gamma$, a position of remoteness $m-1$. So $r(\gamma a)\ge m$, contradiction.  
\end{proof}

There seems to be no similar statements we can make about the optimal strategy for the prisoner; depending on the situation, the prisoner may want to increase the value on a die by the least amount possible, the greatest amount possible, or some amount in between. For example, all such possiblities occur in our ``no primes'' example, $\textbf{S}_2\subseteq\textbf{T}(2,9)$. There seems to be no way for the prisoner to determine anything about his optimal next move from $\beta$, other than to generate the entire string $\textit{Greedy}_\alpha(\textbf{S})$ until $\beta$ is reached.

\section{Interesting examples}

In \cite{SWW}, there are a number of examples of interesting sets $\textbf{S}\subseteq\textbf{T}(n,k)$ where the greedy algorithm produces a universal cycle for $\textbf{S}$. Here are some new such sets derived from Theorem \ref{mainresult}.

\subsection{Strings increasable to a rotation of $\alpha$}

Choose any $\alpha\in\textbf{T}(n,k)$, and let $\textbf{S}\subseteq\textbf{T}(n,k)$ be the strings that are increasable in $\textbf{T}(n,k)$ to a rotation of $\alpha$. Obviously, $\textbf{S}$ is then closed under rotations.

Given any $\beta\in\textbf{S}$, $\beta$ is increasable in $\textbf{T}(n,k)$ to a rotation of $\alpha$. So there are strings $\beta_0,\beta_1,\ldots,\beta_m\in\textbf{T}(n,k)$ such that $\beta_0=\beta$, $\beta_m$ is a rotation of $\alpha$, and each $\beta_i$ can be changed to $\beta_{i+1}$ by increasing one symbol. Then each $\beta_i$ is increasable in $\textbf{T}(n,k)$ to a rotation of $\alpha$, so each $\beta_i\in\textbf{S}$. But that means $\beta$ is actually increasable in $\textbf{S}$ to a rotation of $\alpha$. Since this is true for all $\beta\in\textbf{S}$, the greedy algorithm starting from $\alpha$ generates a universal cycle of $\textbf{S}$.

For example, let $n=3$, $k=4$, and $\alpha=143$. Here's a universal cycle for the strings in $\textbf{T}(3,4)$ increasable to a rotation of 143:

$$(143)1112113122123132133141142143$$

Note: we get the same collection of strings if $\alpha$ is either 314 or 431. But the resulting universal cycle would be different in either such case. Here's the universal cycle for $\alpha=314$:

$$(314)1112113114212213214312313314$$

And here's the universal cycle for $\alpha=431$:

$$(431)1121131221231321331411421431$$

\subsection{Unions}

Let $\textbf{S}_1,\textbf{S}_2\in\mathcal{G}(n,k)$, where $\textbf{S}_1$ and $\textbf{S}_2$ are both closed under rotations. Assume all strings in $\textbf{S}_1$ and $\textbf{S}_2$ are increasable (in their respective sets) to rotations of a single string $\alpha$. Then all strings in $\textbf{S}_1\cup\textbf{S}_2$ are increasable in $\textbf{S}_1\cup\textbf{S}_2$ to a rotation of $\alpha$, so the greedy algorithm starting from $\alpha$ generates a universal cycle of $\textbf{S}_1\cup\textbf{S}_2$.

This raises the question of whether the same can be said of intersections. However, this turns out to be false: even if the greedy algorithm generates universal cycles for $\textbf{S}_1$ and $\textbf{S}_2$, the same may not be true of $\textbf{S}_1\cap\textbf{S}_2$. One simple example will demonstrate why. Let $\textbf{S}_1,\textbf{S}_2\subseteq\textbf{T}(2,3)$ be as follows:

$$\textbf{S}_1=\{11,13,31,33\}$$

$$\textbf{S}_2=\{11,12,21,23,32,33\}$$

The greedy algorithm (starting from 33) generates universal cycles for both $\textbf{S}_1$ and $\textbf{S}_2$, but does not do so for $\textbf{S}_1\cap\textbf{S}_2=\{11,33\}$. The problem is that there may be an element $\beta\in\textbf{S}_1\cap\textbf{S}_2$ which is increasable to a rotation of $\alpha$ in both $\textbf{S}_1$ and in $\textbf{S}_2$, but the paths from $\beta$ to $\alpha$ may be different in each set. (Here, $\beta=11$; we have $11\rightarrow 13\rightarrow 33$ in $\textbf{S}_1$, and $11\rightarrow 12\rightarrow 32\rightarrow 33$ in $\textbf{S}_2$.)

\subsection{Rotations of increasing strings}

Assume that $n\le k$. Let $\textbf{S}$ be the set containing all strictly increasing strings in $\textbf{T}(n,k)$ and their rotations. For example, in $\textbf{T}(3,5)$, $\textbf{S}$ contains the following strings and their rotations:

$$\{123,124,125,134,135,145,234,235,245,345\}$$

By definition, $\textbf{S}$ is closed under rotations. Let $\alpha$ be the lexicographically maximal, strictly increasing string in $\textbf{T}(n,k)$: $$\alpha=(k-n+1)(k-n+2)\cdots(k-1)k.$$ Any string $\beta\in\textbf{S}$ is increasable in $\textbf{S}$ to a rotation of $\alpha$; the greatest symbol in $\beta$ can be increased to $k$, then the next-greatest symbol can be increased to $k-1$, and so on. So a universal cycle for $\textbf{S}$ can be generated with the greedy algorithm.

\subsection{Maximum cyclic increment or cyclic decrement}

Choose integers $I>0$ and $D>0$. Let $\textbf{S}\subseteq\textbf{T}(n,k)$ be the set of strings with no cyclic increment of size greater than $I$ and no cyclic decrement of size greater than $D$. For example, if we take $\textbf{T}(3,4)$, $I=2$, and $D=1$, then $\textbf{S}$ consists of the following strings and their rotations:

$$\{111,112,122,132,222,223,233,243,333,334,344,444\}$$

By definition, $\textbf{S}$ is closed under rotations. Let $\alpha=k^n$; $\alpha$ contains no cyclic increments or decrements, so $\alpha\in\textbf{S}$.

To show that the greedy algorithm works here: choose any $\beta\in\textbf{S}$  such that $\beta\ne k^n$. Assume $\beta=\gamma_1 a\gamma_2$, where $a$ is the least symbol in $\beta$. Let $\beta'=\gamma_1(a+1)\gamma_2$. This change from $\beta$ to $\beta'$ will either decrease the size of cyclic increments/decrements, or will produce a new cyclic increment or decrement of size 1 (which is legal). So $\beta'\in\textbf{S}$. Thus, for any $\beta\in\textbf{S}$, it's possible to increase a symbol of $\beta$ by 1 to produce another string in $\textbf{S}$. This process can be continued until $\alpha$ is reached. So any $\beta\in\textbf{S}$ is increasable in $\textbf{S}$ to $\alpha$, and the greedy algorithm (starting from $\alpha$) produces a universal cycle.

\subsection{Minimum span, maximum span}

Choose integers $m$ and $M$ such that $0\le m<M<k$. Let $\textbf{S}\subseteq\textbf{T}(n,k)$ be the set of strings $\beta$ whose span is at least $m$ and at most $M$. (The ``span'' of a string $\beta$ is the difference between the least and greatest symbols in $\beta$.) For example, if we take $\textbf{T}(3,4)$, $m=1$, and $M=2$, then $\textbf{S}$ consists of the following strings and their permutations:

$$\{112,113,122,123,133,223,224,233,234,244,334,344\}$$

Clearly, $\textbf{S}$ is closed under rotations. Any $\beta\in\textbf{S}$ can be increased in $\textbf{S}$ to a rotation of $\alpha=(k-m)k^{n-1}$, as follows: if the span of $\beta$ is greater than $m$, increase the least symbol of $\beta$ by 1. If the span of $\beta$ equals $m$, increase the greatest symbol of $\beta$ by 1, unless the greatest symbol is $k$. Repeat this process until a string $\beta$ containing the symbol $k$ is reached. At that point, the least symbol in $\beta$ will be $k-m$; leave that one symbol alone, and increase all the other symbols of $\beta$ to $k$.

Thus, the greedy algorithm generates a universal cycle for $\textbf{S}$.

\subsection{Avoiding a substring}

Choose a string $\gamma\in\textbf{T}(m,k)$ for some $m\ge 1$, and let $\textbf{S}\subseteq\textbf{T}(n,k)$ be the set of strings that do not contain $\gamma$ as a cyclic substring. It was proven in \cite{SWW} that if $\gamma$ does not contain $k$, then $\textbf{S}$ can be generated by the greedy algorithm. If $\gamma$ does contain $k$, then we can still make a weaker statement: let $i,j\in\{1,\ldots,k\}$ be two symbols such that $i<j$. If $\gamma$ contains $i$ but not $j$, then $\textbf{S}$ can be generated by the greedy algorithm.

As an example, let $\textbf{S}\subseteq\textbf{T}(3,3)$ be the set of strings not containing 13 as a cyclic substring. (In this case, $i=1$ and $j=2$.) Then $\textbf{S}$ consists of the rotations of the following strings:

$$\{111,112,122,123,222,223,233,333\}$$

The reason why the greedy algorithm works: $k^n\in\textbf{S}$, since the forbidden substring $\gamma$ includes a symbol $a<k$. Given any $\beta\in\textbf{S}$, we can increase $\beta$ in $\textbf{S}$ to $k^n$, as follows: replace any occurrences of $a$ in $\beta$ with $b$, then increase all symbols in $\beta$ to $k$. So all strings in $\textbf{S}$ are increasable in $\textbf{S}$ to $k^n$.

It would seem to be a difficult question to completely categorize the forbidden substrings $\gamma$ for which $\textbf{S}$ can be generated by the greedy algorithm. I have not found any examples of a forbidden substring $\gamma$ containing a symbol $a\le k-2$ where the greedy algorithm fails. And I would conjecture that there are none:

\begin{conjecture}
  Let $\gamma\in\textbf{T}(m,k)$ be a string containing a symbol $a\le k-2$. Let $\textbf{S}\subseteq\textbf{T}(n,k)$ (for some $n\ge m$) be the set of strings not containing $\gamma$ as a cyclic substring. Then the greedy algorithm starting from $k^n$ generates a universal cycle for $\textbf{S}$.
\end{conjecture}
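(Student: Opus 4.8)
The plan is to apply Theorem~\ref{mainresult}. Since the only rotation of $k^n$ is $k^n$ itself, and since $\gamma$ contains a symbol $a\le k-2<k$ (so $\gamma\ne k^m$ as a string, whence $k^n\in\textbf{S}$), the theorem reduces the conjecture to a single purely combinatorial statement: every $\beta\in\textbf{S}$ is increasable in $\textbf{S}$ to $k^n$. I would prove this by induction on the deficiency $D(\beta)=\sum_i(k-\beta_i)$, the base case $D(\beta)=0$ being $\beta=k^n$. For the inductive step it suffices to establish a local escape lemma: for every $\beta\in\textbf{S}$ with $\beta\ne k^n$ there is a single-symbol increase producing some $\beta'\in\textbf{S}$ (necessarily with $D(\beta')<D(\beta)$), after which the inductive hypothesis finishes the job.

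Before attacking the escape lemma I would first shrink the class of forbidden words $\gamma$ using the two results already available. If $\gamma$ does not contain $k$, the result of \cite{SWW} applies directly. If the set of symbols occurring in $\gamma$ fails to be upward closed---that is, if some $i$ occurs in $\gamma$ while some $j>i$ does not---then the weaker result established above applies with this $i$ and $j$. Discarding these cases, I may assume $\gamma$ contains $k$ and that its symbol set is upward closed, hence equal to a final segment $\{t,t+1,\dots,k\}$; since $a\le k-2$ lies in $\gamma$ we have $t\le k-2$. Two features of this normal form drive everything: a symbol lies in $\gamma$ if and only if it is at least $t$, and $\gamma$ uses \emph{every} value in $\{t,\dots,k\}$.

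With $\gamma$ in this form I can dispose of several ranges of the escape lemma cleanly, using the principle that raising a symbol to a value $v$ can create a cyclic copy of $\gamma$ only if $\gamma$ actually uses the value $v$ at the corresponding slot of some matching window. First, if $\min(\beta)>t$, then no window of $\beta$ can ever equal $\gamma$ (there is no $t$ present, and increases keep it that way), so every symbol may be raised to $k$ freely. Second, if $\beta$ has a symbol of value $\le t-2$, raising it to $t-1$ is safe, since $t-1\notin\gamma$ and so no window through that position can spell $\gamma$. Third---the key computation---let $w=\max\{\beta_i:\beta_i<k\}$ and suppose $w\le k-2$ (with $w\ge t-1$, which holds once the previous case is excluded); then raising a position of value $w$ to $k$ is safe, for otherwise the resulting window would equal $\gamma$ except at that slot, forcing some other position to carry $\gamma$'s value $w+1$, a non-$k$ value exceeding $w$, contradicting the maximality of $w$. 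Together with the trivial observation that if every symbol is at least $k-1$ any $k-1$ may be raised to $k$, these cases leave exactly one residual configuration: $\min(\beta)\in\{t-1,t\}$ and $\beta$ contains the value $k-1$.

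This residual configuration is where I expect the real difficulty to lie, and it is presumably why the statement is still only a conjecture. Here the only obviously available move is to raise a position of value $k-1$ to $k$, and this can genuinely be blocked: if it is, then $\beta$ contains a near-copy of $\gamma$, agreeing with $\gamma$ everywhere except that one $k$-slot has been lowered to $k-1$. My plan would be to exploit the fact that $\gamma$ contains a further occurrence of the value $k-1$ inside this near-copy, yielding a second $(k-1)$-position whose raise I analyze in turn, and to show that this cascade cannot block every such position simultaneously---perhaps via a monovariant built from the positions of the minimum value $t$, which every forced near-copy of $\gamma$ must contain. The hardest instance is $t=1$, where $\gamma$ uses the entire alphabet and there is no value outside $\gamma$ to serve as a free target; in that regime the escape must come entirely from the interaction among the several occurrences of large symbols in $\gamma$ and the global structure of $\beta$, and turning the cascade above into a terminating argument is the crux of the whole proof.
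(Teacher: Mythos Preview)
The statement you are addressing is labeled \emph{Conjecture~1} in the paper; the author offers no proof, only the remark that no counterexamples have been found and the surrounding discussion of which forbidden words $\gamma$ over $\{k-1,k\}$ make the greedy algorithm fail. So there is no proof in the paper to compare your proposal against, and any complete argument here would be a new result.

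Your overall reduction is correct and is exactly the natural line of attack: by Theorem~\ref{mainresult} the conjecture is equivalent to the escape lemma (every $\beta\in\textbf{S}\setminus\{k^n\}$ admits a single-symbol increase that stays in $\textbf{S}$), and the induction on the deficiency $D(\beta)$ is the right framing. Your normalization of $\gamma$---using the \cite{SWW} result when $k\notin\gamma$ and the ``$i$ in $\gamma$, $j>i$ not in $\gamma$'' result otherwise---to force the symbol set of $\gamma$ to be a final segment $\{t,\dots,k\}$ with $t\le k-2$ is valid, and cases~1--3 are correctly argued. (Your fourth observation, that when every symbol of $\beta$ is at least $k-1$ one may raise any $k-1$ to $k$, is already subsumed by case~1, since then $\min(\beta)\ge k-1>t$.) The key computation in case~3 is sound: the value $w+1$ lies in $\{t,\dots,k-1\}$ and hence genuinely occurs in $\gamma$, which is what forces the contradiction.

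That said, your proposal is explicitly not a proof: the residual configuration $\min(\beta)\in\{t-1,t\}$ together with $k-1$ present in $\beta$ is left unresolved, and you say so. Your cascade idea (each blocked raise of a $k-1$ reveals a near-copy of $\gamma$, which in turn contains another $k-1$ to try) is plausible, but you have not produced a monovariant or any terminating mechanism; the suggestion of tracking the positions of the minimum value $t$ is only a sketch. This residual case is precisely the obstruction the paper leaves open, so what you have written is a coherent partial strategy toward Conjecture~1, not a proof of it.
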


Now, if all symbols in $\gamma$ are either $k$ or $k-1$, then the greedy algorithm may fail. Let $\textbf{S}\subseteq\textbf{T}(n,9)$ (for some $n\ge 4$) be the set of strings not containing a particular $\gamma$ as a cyclic substring. I leave it as an exercise to the interested reader to show that the greedy algorithm (starting from $\alpha=9^n$) succeeds if $\gamma$ is in the following set...

$$\{8899,8989\}$$

... but the greedy algorithm fails if $\gamma$ is in the following set.

$$\{89,889,899,8889,8999\}$$

Note: for some strings $\gamma$, the outcome depends on the value of $n$. One example is $\gamma=8998$; the greedy algorithm will fail if and only if $n$ is a multiple of 3. (All strings in $\textbf{S}$ will be increasable in $\textbf{S}$ to $9^n$, except for $\beta=(889)^{n/3}$ and its rotations.)

\section{Conclusion and future work: necklaces}

At this point, we have classified the subsets $\textbf{S}\subseteq\textbf{T}(n,k)$, closed under rotations, where the greedy algorithm produces a universal cycle for $\textbf{S}$. But there's one major problem with generating universal cycles with the greedy algorithm: we must store the entire cycle (which could be exponential in length) in order to generate the cycle. Fortunately, when $\textbf{S}=\textbf{T}(n,k)$, there is a faster method:

Given $\alpha\in\textbf{T}(n,k)$, $\alpha$ is a ``necklace'' if, out of all rotations of $\alpha$, $\alpha$ itself is the lexicographically earliest such rotation. If we take all such necklaces in lexicographic order, and append their aperiodic prefixes, we obtain a de Bruijn cycle for $\textbf{T}(n,k)$ (the same cycle produced by the greedy algorithm). For example, here's the resulting universal cycle for $\textbf{T}(3,3)$ (with spaces added between the prefixes):
$$(333)1 \ 112 \ 113 \ 122 \ 123 \ 132 \ 133 \ 2 \ 223 \ 233 \ 3$$
In \cite{SWW}, this is called the FKM algorithm. It is proved in \cite{SWW} that this algorithm produces a universal cycle for $\textbf{S}\subseteq\textbf{T}(n,k)$ (the same universal cycle produced by the greedy algorithm) if
\begin{enumerate}
  \item
  $\textbf{S}$ is closed under rotations, and
  \item
  every necklace in $\textbf{S}$ remains a necklace in $\textbf{S}$ if the suffix of length $i$ (whenever $1\le i\le n$) is replaced with $k^i$. (When this second condition holds true, $\textbf{S}$ is referred to as a ``$k$-suffix language''.)
\end{enumerate}
However, this is not an ``if and only if'' situation. The following example of a set $\textbf{S}\subseteq\textbf{T}(4,3)$ is given in \cite{SWW}:
$$\textbf{S}=\{1112,1121,1122,1211,1212,1221,1222,1322,2111,$$
$$2112,2121,2122,2132,2211,2212,2213,2221,3221\}$$
Not all necklaces in $\textbf{S}$ remain in $\textbf{S}$ when a suffix is replaced with all 3's. However, the FKM algorithm works here. The necklaces in $\textbf{S}$, in lexicographic order, are 1112, 1122, 1212, 1222, and 1322. Reduce 1212 to its aperiodic prefix 12, then concatenate all the strings, and you obtain a universal cycle:
$$(1322)1112 \ 1122 \ 12 \ 1222 \ 1322$$
So it is natural to ask whether there is a necessary and sufficient condition on $\textbf{S}\subseteq\textbf{T}(n,k)$ so that the FKM algorithm generates a universal cycle for $\textbf{S}$. I have a possible candidate for just such a condition:

Let's generalize the concept of a $k$-suffix language as follows. Given a string $\alpha\in\textbf{T}(n,k)$, call a set $\textbf{S}\subseteq\textbf{T}(n,k)$ an ``$\alpha$-suffix language'' if, for any $\beta\in\textbf{S}$, each symbol in $\beta$ is at most the corresponding symbol in $\alpha$, and we obtain another element of $\textbf{S}$ if we replace any suffix of $\beta$ with an equal-length suffix of $\alpha$. That is, if $\beta=b_1\cdots b_n$ and $\alpha=a_1\cdots a_n$, then for all $m$ such that $1\le m\le n$, we have $b_m\le a_m$ and $b_1\cdots b_{m-1}a_m\cdots a_n\in\textbf{S}$. With this definition in place, I would conjecture the following:

\begin{conjecture}
  Let $\textbf{S}\subseteq\textbf{T}(n,k)$ be a set that is closed under rotations. Then the FKM algorithm generates a universal cycle for $\textbf{S}$ if and only if the set of necklaces in $\textbf{S}$ is an $\alpha$-suffix language, where $\alpha$ is the lexicographically maximal necklace in $\textbf{S}$.
\end{conjecture}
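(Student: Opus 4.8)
The plan is to reduce the conjecture to the greedy machinery of Section 3 by comparing the FKM output with the greedy cycle, and to isolate a single structural lemma that drives both directions. Fix the lexicographically maximal necklace $\alpha\in\textbf{S}$, list the necklaces of $\textbf{S}$ in lexicographic order $\nu_1<\cdots<\nu_N=\alpha$, and write $W=\text{ap}(\nu_1)\cdots\text{ap}(\nu_N)$ for the FKM string, where $\text{ap}(\nu_j)$ is the aperiodic prefix of $\nu_j$, read as a cycle. Since $\textbf{S}$ is a disjoint union of rotation classes and the aperiodic-prefix length of a necklace equals the size of its rotation class, $W$ always has length exactly $|\textbf{S}|$; hence FKM produces a universal cycle if and only if the $|\textbf{S}|$ length-$n$ windows of $W$ are pairwise distinct and all lie in $\textbf{S}$. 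The entire problem is therefore to control these windows, and in particular the ``wrap'' windows that straddle a block boundary $\text{ap}(\nu_j)\,\text{ap}(\nu_{j+1})$.

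For the ``if'' direction, suppose the set of necklaces is an $\alpha$-suffix language. I would first note that this already forces the greedy cycle from $\alpha$ to be universal: for a necklace $\nu=b_1\cdots b_n$, the suffix-replacements $\nu^{(m)}=b_1\cdots b_{n-m}a_{n-m+1}\cdots a_n$ form a chain $\nu=\nu^{(0)},\nu^{(1)},\ldots,\nu^{(n)}=\alpha$ in which consecutive strings differ only by increasing the symbol in position $n-m$ from $b_{n-m}$ to $a_{n-m}\ge b_{n-m}$, and every $\nu^{(m)}$ lies in $\textbf{L}\subseteq\textbf{S}$; thus $\nu$ is increasable in $\textbf{S}$ to $\alpha$. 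Because increasability is compatible with rotation and $\textbf{S}$ is rotation-closed, every $\beta\in\textbf{S}$ is then increasable to a rotation of $\alpha$, so by Theorem \ref{mainresult} the greedy algorithm from $\alpha$ yields a universal cycle. It remains to show that $W$ is the \emph{same} cyclic sequence as this greedy cycle. I would prove this by matching $W$ against the deterministic greedy process: reading $W$ as a cycle ending at $\alpha$, I would verify that at each step $W$ selects exactly the greedy choice, namely the least symbol keeping all windows distinct and in $\textbf{S}$. Within a block this is immediate; at a block boundary the increase forced on the rightmost symbol is exactly the minimal legal one precisely because the $\alpha$-suffix property guarantees that every intermediate suffix-replacement is itself an available necklace, so no smaller completion was skipped. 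Since greedy generates the unique remoteness-ordered cycle, $W$ coincides with it and is universal.

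For the ``only if'' direction I would argue by contraposition. If the necklaces do not form an $\alpha$-suffix language, choose a witnessing necklace $\nu=b_1\cdots b_n$ and a least suffix-length $m$ for which either $b_m>a_m$ or $\nu'=b_1\cdots b_{m-1}a_m\cdots a_n\notin\textbf{L}$. I would then trace the window of $W$ that begins inside $\text{ap}(\nu)$ and is \emph{intended} to complete to the corresponding rotation of $\nu$: the failure of the suffix condition means the symbols following $\text{ap}(\nu)$ cannot spell the required prefix, so this window is either not a rotation of $\nu$ at all (landing outside $\textbf{S}$, or on a string whose necklace was already emitted, hence duplicating a window) or else some rotation of $\nu$ is never produced. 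In every case the windows of $W$ fail to be exactly $\textbf{S}$, so FKM is not universal. Making ``the symbols following $\text{ap}(\nu)$'' precise is again the wrap-window analysis.

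The crux in both directions is a single structural lemma: a precise description, for general rotation-closed $\textbf{S}$, of the length-$n$ windows of $W$, and especially of the symbols immediately following each block $\text{ap}(\nu_j)$. For the full alphabet this is the classical Fredricksen--Maiorana analysis, and for $k$-suffix languages it is the content of the result cited from \cite{SWW}; the task is to extend it to the $\alpha$-suffix setting and to pin down the exact equivalence ``a block transition is a minimal greedy increase $\iff$ the corresponding suffix-replacement stays among the necklaces.'' I expect the delicate point to be the handling of periodic necklaces, where $\text{ap}(\nu)$ is a proper prefix and a single window may wrap across several blocks, and the verification that the lexicographic ordering of the necklaces makes the wrap symbols agree with the required rotation prefixes exactly under the $\alpha$-suffix hypothesis. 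Once this lemma is established, both implications follow as sketched above.
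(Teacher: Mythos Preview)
This statement is a \emph{conjecture} in the paper, not a theorem: the paper offers no proof. What the paper provides is a one-paragraph heuristic (``it appears that the prisoner has a strategy such that \ldots\ the next necklace position reached is lexicographically earlier''), explicitly hedged with ``appears'' and ``perhaps,'' and then leaves the question open. So there is no proof in the paper to compare your attempt against.

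Your proposal is likewise not a proof but a plan. You yourself identify the missing piece: the ``single structural lemma'' describing the wrap windows of the FKM string and the equivalence ``a block transition is a minimal greedy increase $\iff$ the corresponding suffix-replacement stays among the necklaces.'' You do not prove this lemma; you say you ``expect the delicate point to be'' the periodic case and then assert that ``once this lemma is established, both implications follow.'' That is exactly the gap. The parts you do argue in detail are fine: your verification that the $\alpha$-suffix hypothesis on the necklace set makes every $\beta\in\textbf{S}$ increasable in $\textbf{S}$ to a rotation of $\alpha$ (hence, by Theorem~\ref{mainresult}, the greedy cycle is universal) is correct and is already more than the paper writes down. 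But the step ``FKM equals greedy'' in the forward direction, and the entire contrapositive in the backward direction, rest on the unproved lemma. In particular, your ``only if'' sketch (``the symbols following $\text{ap}(\nu)$ cannot spell the required prefix, so this window is either not a rotation of $\nu$ \ldots\ or else some rotation of $\nu$ is never produced'') is not an argument: nothing you have written rules out that a violation of the $\alpha$-suffix condition at some necklace $\nu$ is harmlessly absorbed elsewhere in the concatenation.

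As a comparison of heuristics: the paper's intuition is game-theoretic (control which necklace is visited next via the prisoner's strategy, hence necklaces occur in lexicographic order in the greedy cycle), while yours is combinatorial (analyze the length-$n$ windows of the FKM concatenation directly and match them symbol-by-symbol to greedy choices). Both approaches funnel through the same unresolved question---why the block boundaries behave correctly exactly under the $\alpha$-suffix hypothesis---and neither the paper nor your proposal settles it.
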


The reason for this conjecture: assume that $\textbf{S}\subseteq\textbf{T}(n,k)$ is closed under rotations and is an $\alpha$-suffix language, where $\alpha$ is the lexicographically maximal necklace in $\textbf{S}$. Under these circumstances, it appears that the prisoner has a strategy such that, if the current position is a necklace $\beta\in\textbf{S}$, the prisoner can ensure that the next necklace position reached is lexicographically earlier than $\beta$. Thus, in the universal cycle generated by the greedy algorithm, the necklaces appear in lexicographic order. Perhaps then, the FKM algorithm generates the same cycle as the greedy algorithm.

\end{document}